 \let\mathscr\relax
\newcommand{\qmod}[1]{\operatorname{QM}[#1]}
\newcommand{\qm}{\operatorname{QM} }
\newcommand{\re}{\mathbb{R}}
\newcommand{\N}{\mathbb{N}}
\newcommand{\bfc}{\mathbf{c}}
\newcommand{\lmd}{\lambda}
\newcommand{\vareps}{\varepsilon}
\def\af{\alpha}
\def\gm{\gamma}
\def\rank{\mbox{rank}}
\newcommand{\sig}{\sigma}
\newcommand{\Sig}{\Sigma}
\newcommand{\st}{\mathrm{s.t.}}
\newcommand{\mt}[1]{\mathtt{#1}}
\newcommand{\reff}[1]{(\ref{#1})}
\newcommand{\mc}[1]{\mathcal{#1}}
\newcommand{\ddd}{,\ldots,}
\newcommand{\bdes}{\begin{description}}
\newcommand{\edes}{\end{description}}
\newcommand{\bal}{\begin{align}}
\newcommand{\eal}{\end{align}}
\newcommand{\bnum}{\begin{enumerate}}
\newcommand{\enum}{\end{enumerate}}
\newcommand{\bit}{\begin{itemize}}
\newcommand{\eit}{\end{itemize}}
\newcommand{\bea}{\begin{eqnarray}}
\newcommand{\eea}{\end{eqnarray}}
\newcommand{\be}{\begin{equation}}
\newcommand{\ee}{\end{equation}}
\newcommand{\baray}{\begin{array}}
\newcommand{\earay}{\end{array}}
\newcommand{\bsry}{\begin{subarray}}
\newcommand{\esry}{\end{subarray}}
\newcommand{\bca}{\begin{cases}}
\newcommand{\eca}{\end{cases}}
\newcommand{\bcen}{\begin{center}}
\newcommand{\ecen}{\end{center}}
\newcommand{\bbm}{\begin{bmatrix}}
\newcommand{\ebm}{\end{bmatrix}}
\newcommand{\bmx}{\begin{matrix}}
\newcommand{\emx}{\end{matrix}}
\newcommand{\bpm}{\begin{pmatrix}}
\newcommand{\epm}{\end{pmatrix}}
\newcommand{\btab}{\begin{tabular}}
\newcommand{\etab}{\end{tabular}}
\newtheorem{theorem}{Theorem}[section]
\newtheorem{example}[theorem]{Example}
\newtheorem{remark}[theorem]{Remark}
\numberwithin{equation}{section}
\title{Learning Polynomial Activation Functions for Deep Neural Networks}
\author{
Linghao~Zhang 
\thanks{Department of Mathematics, 
University of California San Diego, 
9500 Gilman Drive, La Jolla, CA 92093, 
\texttt{liz010@ucsd.edu}}
\And
Jiawang~Nie
\thanks{Department of Mathematics, 
University of California San Diego, 
9500 Gilman Drive, La Jolla, CA 92093, 
\texttt{njw@math.ucsd.edu}}
\And
Tingting~Tang 
\thanks{Department of Mathematics and Statistics, 
San Diego State University Imperial Valley, 
720 Heber Ave, Calexico, CA 92231,
\texttt{ttang2@sdsu.edu}}
}
\begin{document}

\maketitle

\begin{abstract}
Activation functions are crucial for deep neural networks. 
This novel work frames the problem of training neural network with learnable polynomial activation functions as a polynomial optimization problem, 
which is solvable by the Moment-SOS hierarchy. 
This work represents a fundamental departure from the conventional paradigm of training deep neural networks, 
which relies on local optimization methods like backpropagation and gradient descent. 
Numerical experiments are presented to demonstrate the accuracy and robustness of optimum parameter recovery in presence of noises.
\end{abstract}

\section{Introduction}

One essential component for deep neural networks (DNN) is the selection of activation functions, such as sigmoid, ReLU, and their numerous variants~\cite{Glorot11, NairHinton10, Ramachandran17,xu15}. 
In recent years, there is a growing interest in designing adaptive or learnable activation functions to enhance deep neural networks~\cite{Agostinelli14,Apicella21}. 
Among many options, polynomial activation functions offer a particularly appealing choice: 
they are convenient to analyze and have a variety of applications, 
including image generation~\cite{Chrysos21,Chrysos19,karras19}, 
face detection~\cite{huang03}, 
and fuzzy modeling~\cite{park02,Zarandi08}. 
Moreover, when the activation is parameterized as a polynomial with learnable coefficients, the network’s input-output map becomes a polynomial function.
This allows tools from algebraic geometry to be used for a rigorous analysis of network properties, 
including the network's expressive power or the complexity of its training process.
Such work offers a new geometric perspective for understanding deep learning~\cite{kileel2019expressive,kubjas2024geometry}. 

The key novelty of this paper arises from the following realization: 
with polynomial activation functions whose coefficients are to be learned, 
the entire training process of a deep neural network can be formulated as a nonconvex polynomial optimization problem (POP). 
This powerful structural insight allows us to move beyond the limitations of traditional gradient-based optimization. 
While classical methods like stochastic gradient descent (SGD) often guarantees convergence to stationary points, 
the POP formulation opens the door to global optimization techniques.

There are efficient computational methods for solving polynomial optimization.
The core technique is the hierarchy of Moment-SOS relaxations. 
It transforms a nonconvex problem into a linear convex optimization problem with the moment cone that can be solved by a hierarchy of semidefinite programming relaxations.
The dual of the moment cone is the nonnegative polynomial cone, which can be approximated by sum of squares (SOS) type representations. 
The sequence of these primal-dual pairs of relaxations is the called Moment-SOS hierarchy. 
An appealing property of this hierarchy is that almost all polynomial optimization problems can be solved exactly, for both their optimal values and optimizers, by applying Moment-SOS relaxations.
We refer to \cite{HKL20,lasserre2015introduction,Lau09,nie2023moment} for detailed introductions to moment and polynomial optimization.
These methods have been successfully applied to control~\cite{korda14,leb24,Pauwels17}, 
data science optimization~\cite{Nie12,NWY17,NYZZ23,NZ25,ZCN24}, 
tensor computation~\cite{GNY22,Nie17,NWZ23,zhou25}, 
and machine learning tasks~\cite{chen21,huang25}.

In this paper, we consider feedforward neural networks in which the weight matrices are given, 
and all hidden layers apply polynomial activation functions whose coefficients are to be learned. 
We formulate the training process as a polynomial optimization problem and solve it by Moment-SOS relaxations. 

For a $D$-hidden-layer neural network,
let the input and output dimensions be $m_0$ and $m_{D+1}$.
Denote by $m_{\ell}$ the number of hidden neurons in the $\ell$th hidden layer. 
For $\ell = 1 \ddd D+1$, the weight matrices are
$
W_{\ell} \in \re^{m_{\ell} \times m_{\ell - 1}}.
$
For each hidden layer $\ell=1 \ddd D$, 
let $p_{\ell} : \re^1 \to \re^1$ be a polynomial activation function such that
\[
p_{\ell}(t) = c_{\ell, d_{\ell}}t^{d_{\ell}} + \cdots + c_{\ell, 1}t + c_{\ell, 0}, 
\quad c_{\ell, 0}, c_{\ell, 1} \ddd c_{\ell, d_{\ell}} \in \re.
\]
For uniqueness of the coefficient recovery, 
we usually normalize the coefficients such that $c_{1,0} =  \cdots = c_{D-1,0} =1$.
The rest of the coefficients are collected in the vector
\be\label{c}
\bfc  \coloneqq  (c_{1,1} , \ldots, c_{1,d_1} \ddd
c_{D-1,1} , \ldots, c_{D-1,d_{D-1}},
c_{D,0}, c_{D,1} \ddd c_{D, d_D}  ) .
\ee
Given an input $x \in \re^{m_0}$, define
\[
h_0 \coloneqq x, \quad v_{\ell} \coloneqq W_{\ell}h_{\ell-1} \in \re^{m_{\ell}},
\quad h_{\ell} \coloneqq p_{\ell}(v_{\ell}) \in \re^{m_{\ell}}, 
\quad \ell = 1 \ddd D.
\]
In the above, the activation function $p_{\ell}$ is applied element-wise.
The network output is
\[
f(x;\bfc) \coloneqq W_{D+1}h_D \in \re^{m_{D+1}}.
\]
Equivalently, if $P_{\ell} : \re^{m_{\ell-1}} \to \re^{m_{\ell}}$ denotes the $\ell$th layer operator such that
\[
P_{\ell}(u) = \big( p_{\ell}( w_{\ell, 1}^Tu ) \ddd p_{\ell}( w_{\ell, m_{\ell}}^T u ) \big)
\]
with $w_{\ell, j}^T$ denoting the $j$th row of $W_{\ell}$,
then
\be\label{f_map}
f(x;\bfc) = W_{D+1}P_D( P_{D-1} ( \cdots P_1(x)) ).
\ee
Given a training dataset of input-output pairs $\{ (x_i, y_i) \}_{i=1}^N$,
we consider the optimization problem
\be\label{uncont}
\min_{\bfc} \quad \mc{L}(\bfc) \coloneqq \Big\| \frac{1}{N} \sum\limits_{i=1}^N \big( f(x_i; \bfc) - y_i \big) \Big\|_{\infty},
\ee
where $\mc{L}(\bfc)$ is the loss function.
The epigraph reformulation of \reff{uncont} is
\be\label{epi}
\begin{cases}
\min\limits_{\bfc, \theta} & \theta \\
\ \st & \Big\| \frac{1}{N} \sum\limits_{i=1}^N \big( f(x_i; \bfc) - y_i \big) \Big\|_{\infty} \le \theta.
\end{cases}
\ee
Denote by $\vartheta_{\min}$ the minimum value of \reff{epi}. 
In this paper, we aim at computing the global minimum value $\vartheta_{\min}$ 
and finding a global minimizer $(\bfc^*, \theta^*)$.

In this work, we propose a new approach for training polynomial activation functions for deep neural networks.
Our main contributions are:
\bit
\item We propose a training framework for selecting polynomial activation functions whose coefficients are to be learned.

\item We formulate the neural network training as a polynomial optimization problem and solve it by the Moment-SOS hierarchy.

\item We provide numerical experiments that demonstrate the efficiency and accuracy of learning activation function in presence of noises.
For the noiseless case, our method can recover activation functions exactly.

\eit

This paper is organized as follows.
Some basics on polynomial optimization are reviewed in Section~\ref{pre}.
In Section~\ref{ms_hier}, we give the Moment-SOS hierarchy of optimization problem \reff{epi} 
and prove convergence under certain assumptions.
Numerical experiments are presented in Section~\ref{eg}.
Some conclusions are drawn in Section~\ref{conclu}.

\section{Background}
\label{pre}

Denote by $\N$ the set of nonnegative integers and $\re$ the real field.
For a positive integer $m$, denote $[m] \coloneqq \{1\ddd m\}$.
For a polynomial $p$, $\deg(p)$ denotes the total degree.
The ring of polynomials in $z = (z_1 \ddd z_n)$ with real coefficients is denoted as  $\re[z]$.
For a degree $d$, the subset of polynomials in $\re[z]$ with degrees at most $d$ is denoted as $\re[z]_d$.
For $p \in \re[z]$, $\nabla p$ denotes its gradient with respect to $z$,
and $\nabla^2 p$ denotes its Hessian with respect to $z$.
Given a power $\af = (\af_1, \ldots, \af_n)$, we
denote the monomial $z^\af \coloneqq z_1^{\af_1} \cdots z_n^{\af_n}$ and $|\af| \coloneqq \af_1 + \cdots + \af_n$.
Denote by $e_i$ the standard basis vector such that the $i$th entry is $1$ and $0$ otherwise.
The infinity norm of $z$ is
$\|z\|_{\infty} \coloneqq \max_{1 \le i \le n} |z_i|$.
The Euclidean norm of $z$ is
$\|z\|_2 \coloneqq (|z_1|^2 + \cdots + |z_n|^2)^{1/2}$.
For a symmetric matrix $X$, $X \succeq 0$
means $X$ is positive semidefinite.
%The $n \times n$ identity matrix is denoted as $I_n$.

\smallskip
A polynomial $\sigma \in \re[z]$ is said to be a {\it sum of squares} (SOS)
if there are polynomials $p_1, \ldots, p_s \in \re[z]$ such that
$\sigma = p_1^2 + \cdots + p_s^2$.
For example, the polynomial
\[
z_1^4 - z_1^2(1+2z_1z_2) + z_1^2z_2^2 + 2z_1z_2 + 1 = z_1^2 + (1-z_1^2+z_1z_2)^2
\]
is SOS.
The cone of all SOS polynomials in $\re[z]$ is denoted as
$\Sigma[z]$.
For a tuple $g \coloneqq (g_1 \ddd g_s)$
of polynomials in $\re[z]$,
the degree-$2k$ truncation of its quadratic module is the set ($g_0 \coloneqq 1$)
\[
\qm[g]_{2k} \coloneqq \Big\{ \sum_{j=0}^{s} \sig_j g_{j} :
\sig_j \in \Sig[z], \deg(\sig_j g_{j}) \le 2k \Big\}.
\]
The set $\qm[g]_{2k}$ is convex and can be represented by semidefinite programming.
We refer to Section~2.5 of \cite{nie2023moment} for more details.

\smallskip
For a power $\af = (\af_1 \ddd \af_n)$, let
$\N_d^n \coloneqq\{ \af \in \N^n : |\af| \le d \}$
be the set of monomial powers with degrees at most $d$.
The symbol $\re^{\N^n_d}$ denotes the space of all real vectors labeled by $\af \in \N_d^n$.
A vector $w$ in $\re^{\N_{d}^{n}}$
is said to be a truncated multi-sequence (tms) of degree $d$.
For a given $w \in \re^{\N_d^{n}}$,
the {\it Riesz functional} generated by $w$
is the linear functional $\mathscr{R}_{w}$ acting on $\re[z]_d$ such that
\[
\mathscr{R}_w \Big( \sum_{\af \in \N_d^n} p_{\af} z^{\af} \Big) \coloneqq \sum_{\af \in \N_d^n} p_{\af} w_{\af}.
\]
For convenience, we denote
$\langle p \, , \, w \rangle \coloneq \mathscr{R}_w(p)$ for $p \in \re[z]_d$.
The {\it localizing matrix} of $p \in \re[z]$ generated by $w$ is
\[
L_{p}^{(k)}[w] \coloneqq \mathscr{R}_{w} \left( p(z) \cdot [z]_{k_1}[z]^T_{k_1} \right),
\quad k_1 \coloneqq \lfloor k - \deg(p) / 2 \rfloor.
\]
For instance, when $n=2, k=2$, and $p = 1-z_1^2-z_2^2$,
\[
L^{(2)}_p[w] =
\bbm 
w_{00} - w_{20} - w_{02} & w_{10} - w_{30} - w_{12} & w_{01} - w_{21} - w_{03} \\
w_{10} - w_{30} - w_{12} & w_{20} - w_{40} - w_{22}  & w_{11} - w_{31} - w_{13} \\
w_{01} - w_{21} - w_{03} & w_{11} - w_{31} - w_{13} & w_{02} - w_{22} - w_{04}
\ebm.
\]
In particular, if $p=1$ is the constant polynomial, we get the {\it moment matrix}
\[
M_k[w] \coloneqq L_{1}^{(k)}[w].
\]
We refer to \cite{lasserre2015introduction,Lau09,nie2023moment}
for more detailed introductions to polynomial optimization.

\section{The Moment-SOS Hierarchy}
\label{ms_hier}

In this section, we give a hierarchy of Moment-SOS relaxations to solve \reff{epi},
and discuss how to extract minimizers from the moment relaxations.
For convenience of notation, we denote ($\bfc$ is defined in \reff{c})
\[
z \coloneqq (\bfc, \theta) = (z_1 \ddd z_n), \quad n = d_1 + \cdots + d_{D} + 2.
\]
We rewrite the optimization problem \reff{epi} as follows:
\be\label{epi_hat}
\begin{cases}
\min\limits_{z \in \re^n} & z_n \\
\ \st & \| \hat{f}(z) - \hat{y} \|_{\infty} \le z_n.
\end{cases}
\ee
In the above, 
$\hat{f}(z) \coloneqq \frac{1}{N} \sum_{i=1}^N f(x_i; \bfc)$ and
$\hat{y} \coloneqq \frac{1}{N} \sum_{i=1}^N y_i$.
Observe that 
\[
\| \hat{f}(z) - \hat{y} \|_{\infty} \le z_n \iff
-z_n \le \hat{f}_j(z) - \hat{y}_j \le z_n, \quad j = 1 \ddd m_{D+1}.
\]
Define the polynomial tuple $g(z) \coloneqq \big( g_1(z), \ddd g_{\hat{m}}(z) \big)$ such that
\begin{align*}
g(z) &= \Big( z_n + (\hat{f}_1(z) - \hat{y}_1 ),  z_n - (\hat{f}_1(z) - \hat{y}_1 ) \ddd \\
&\qquad z_n + (\hat{f}_{m_{D+1}}(z) - \hat{y}_{m_{D+1}} ), z_n - (\hat{f}_{m_{D+1}}(z) - \hat{y}_{m_{D+1}} ) \Big),
\end{align*}
where $\hat{m} = 2m_{D+1}$.
Then, \reff{epi_hat} is equivalent to
\be\label{pop}
\begin{cases}
\min\limits_{z \in \re^n} & z_n \\
\ \st &  g_1(z) \ge 0 \ddd g_{\hat{m}}(z) \ge 0.
\end{cases}
\ee
The feasible set of \reff{pop} is
\[
K = \{ z \in \re^{n} \, : \, g_1(z) \ge 0 \ddd g_{\hat{m}}(z) \ge 0  \}.
\]
Let
$
k_0 \coloneqq  \lceil \deg(g)/2\rceil  .
$
For a degree $k\ge k_0$, the $k$th order SOS-relaxation for \reff{pop} is
\be\label{pop_sos}
\left\{  \baray{rrl}
\vartheta_{sos,k} \coloneqq &\max&  \gm \\
& \st  &  z_n - \gm \in \qmod{g}_{2k}.
\earay \right.
\ee
Its dual optimization problem is the $k$th order moment relaxation
\be\label{pop_mom}
\left\{  \baray{rrl}
\vartheta_{mom,k} \coloneqq &\min\limits_{w} & \langle z_n \, , \, w \rangle \\
& \st & L_{g_j}^{(k)}[w] \succeq 0, \, j = 1 \ddd \hat{m} , \\
& & M_k[w] \succeq 0, \, w_0 = 1, \, w \in \re^{\N^{n}_{2k}}.
\earay \right.
\ee
Note that $\langle z_n \, , \, w \rangle = \mathscr{R}_w(z_n) = w_{e_n}$, 
the entry of $w$ indexed by $e_n$.
We refer to Section~\ref{pre} for the above notation.

In computational practice, we start with relaxation order $k=k_0$.
If the relaxation \reff{pop_mom} is successfully solved, we extract its minimizers.
Otherwise, we increase $k$ by one and repeat this process.
For $k = k_0, k_0+1 \ddd$ the sequence of primal-dual pairs \reff{pop_sos}-\reff{pop_mom} is called the Moment-SOS hierarchy. 
Recall that $\vartheta_{\min}$ denotes the minimum value of \reff{epi}.
Since \reff{epi} can be equivalently formulated as \reff{epi_hat} and \reff{pop}, $\vartheta_{\min}$ is also the minimum value for \reff{epi_hat} and \reff{pop}.
For each relaxation order $k$, it holds that
\be \label{bound}
\vartheta_{sos,k} \le \vartheta_{mom,k} \le \vartheta_{\min}. 
\ee
Moreover, both sequences $\{\vartheta_{sos,k}\}_{k=1}^{\infty}$ and $\{\vartheta_{mom,k}\}_{k=1}^{\infty}$ are monotonically increasing.
If the quadratic module $\qmod{g}$ is archimedean \cite[Section~2.6]{nie2023moment}, then 
the Moment-SOS hierarchy of \reff{pop_sos}-\reff{pop_mom} has asymptotic convergence, i.e.,
\[
\lim_{k\to \infty} \vartheta_{sos,k} = \lim_{k\to \infty} \vartheta_{mom,k} = \vartheta_{\min}.
\]
Under some conditions, the Moment-SOS hierarchy has finite convergence,
and we can extract minimizers of \reff{epi_hat} from the moment relaxation \reff{pop_mom}.
Suppose $w^*$ is a minimizer of \reff{pop_mom} for a relaxation order $k \ge k_0$.
To obtain a minimizer of \reff{epi_hat}, we typically need the {\it flat truncation} condition:
there exists an integer $d \in [k_0, k]$ such that 
\be\label{flat}
r \coloneqq \rank \, M_d[w^*] = \rank \, M_{d-k_0}[w^*].
\ee

\begin{theorem}\cite{nie2023moment}
Suppose $w^*$ is a minimizer of \reff{pop_mom}.
If the flat truncation condition \reff{flat} holds, 
then the Moment-SOS relaxation is tight, 
i.e., $\vartheta_{\min} = \vartheta_{sos,k} = \vartheta_{mom,k}$.
\end{theorem}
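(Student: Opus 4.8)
The plan is to prove tightness by exhibiting a finitely atomic measure whose moments match the moment minimizer $w^*$ and whose atoms are feasible points of \reff{pop}; this converts the abstract moment optimum into genuine minimizers and lets me sandwich the relaxation values. First I would pass to the degree-$2d$ truncation $u \coloneqq w^*|_{2d}$, where $d \in [k_0,k]$ is the order supplied by the flat truncation condition \reff{flat}, so that $\rank M_d[u] = \rank M_{d-k_0}[u] = r$. The heart of the argument is the flat extension theorem of Curto--Fialkow: a rank-preserving (flat) moment matrix admits an $r$-atomic representing measure. Since $M_d[u]$ is a flat extension of $M_{d-k_0}[u]$, there exists a measure $\mu = \sum_{\ell=1}^r \lambda_\ell \delta_{v_\ell}$, with weights $\lambda_\ell > 0$ summing to $w_0 = 1$, whose moments up to degree $2d$ agree with $u$.

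Next I would verify that $\supp{\mu} \subseteq K$. Because $d \le k$, each localizing matrix $L_{g_j}^{(d)}[w^*]$ is a principal submatrix of $L_{g_j}^{(k)}[w^*] \succeq 0$ and is therefore itself positive semidefinite; combined with the flatness of $M_d[u]$, the localizing form of the flat extension theorem forces every atom to satisfy $g_j(v_\ell) \ge 0$ for all $j = 1 \ddd \hat{m}$. Hence each $v_\ell$ lies in the feasible set $K$ of \reff{pop}. With feasibility in hand, the moment-side tightness is immediate: each $v_\ell \in K$ gives $(v_\ell)_n \ge \vartheta_{\min}$, so
\[
\vartheta_{mom,k} = \langle z_n \, , \, w^* \rangle = \langle z_n \, , \, u \rangle = \int z_n \, d\mu = \sum_{\ell=1}^r \lambda_\ell (v_\ell)_n \ge \vartheta_{\min}.
\]
Together with the standard bound \reff{bound}, this yields $\vartheta_{mom,k} = \vartheta_{\min}$. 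Moreover, since the $\lambda_\ell$ are strictly positive and the convex combination equals the minimum, every atom must satisfy $(v_\ell)_n = \vartheta_{\min}$, so the $v_\ell$ are exactly the global minimizers of \reff{epi_hat}.

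It remains to close the duality gap $\vartheta_{sos,k} = \vartheta_{mom,k}$. Weak duality already gives $\vartheta_{sos,k} \le \vartheta_{mom,k}$, and I would obtain equality from strong duality for the primal--dual pair \reff{pop_sos}--\reff{pop_mom}. The moment program is strictly feasible: for a sufficiently large value of the coordinate $z_n$ every $g_j$ is strictly positive, so $K$ has nonempty interior, and a measure with positive density on a small ball inside $K$ produces $M_k[w] \succ 0$ and $L_{g_j}^{(k)}[w] \succ 0$. This Slater condition rules out a duality gap and forces $\vartheta_{sos,k} = \vartheta_{mom,k} = \vartheta_{\min}$.

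The step I expect to be the main obstacle is the support claim: reconciling the rank bookkeeping among $M_{d-k_0}[w^*]$, $M_d[w^*]$, and the localizing matrices so that the representing measure produced by flat extension is genuinely supported inside $K$, rather than merely on $\re^n$. This is where the constraint geometry and the Curto--Fialkow machinery must be combined carefully, and it is precisely the content encapsulated by the cited result of \cite{nie2023moment}; the sandwich and the strong-duality conclusion are then routine.
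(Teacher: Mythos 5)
The paper does not actually prove this theorem; it is quoted directly from \cite{nie2023moment}, so there is no internal proof to compare against. Your proposal is a correct reconstruction of the standard argument behind the cited result. The core steps are exactly right: truncate $w^*$ to degree $2d$, observe that $M_d[w^*]$ and each $L_{g_j}^{(d)}[w^*]$ are positive semidefinite because they are leading principal submatrices of the corresponding order-$k$ matrices, and invoke the truncated $K$-moment theorem of Curto--Fialkow (as presented in \cite{nie2023moment}); the rank gap $k_0 = \lceil \deg(g)/2 \rceil$ built into condition \reff{flat} is precisely what forces the $r$-atomic representing measure to be supported in $K$ rather than merely in $\re^n$. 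The sandwich $\vartheta_{mom,k} = \sum_{\ell} \lambda_\ell (v_\ell)_n \ge \vartheta_{\min} \ge \vartheta_{mom,k}$ then gives moment-side tightness, and your remark that every atom is a global minimizer is exactly the extraction statement the paper records after the theorem. One point deserves emphasis, and you handled it correctly: the equality $\vartheta_{sos,k} = \vartheta_{mom,k}$ is \emph{not} a consequence of flat truncation alone; for general polynomial optimization problems the SOS and moment values can differ by a duality gap even when the moment relaxation is tight. Your Slater argument closes this gap using the specific epigraph structure of \reff{pop}: for any fixed $\bfc$, taking $z_n$ sufficiently large makes every $g_j$ strictly positive, so $K$ has nonempty interior, and the truncated moments of a probability measure with positive density on a small ball in that interior give a strictly feasible point of \reff{pop_mom}; since $\vartheta_{mom,k}$ is finite (it is attained at $w^*$ by hypothesis), conic strong duality yields $\vartheta_{sos,k} = \vartheta_{mom,k}$. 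So your proof is complete modulo the two standard cited theorems (flat extension and conic duality), and it is, if anything, more explicit than the paper about which hypothesis each of the two equalities actually requires.
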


For the case $r=1$, if $w^*$ is a minimizer of \reff{pop_mom}, 
then $z^* = (w^*_{e_1} \ddd w^*_{e_n})$ is a minimizer of \reff{epi_hat}. 
For the case $r>1$, we refer to Section~5.3 of \cite{nie2023moment} for the extraction of minimizers.

There is a close relationship between the finite convergence theory of Moment-SOS hierarchy and local optimality conditions.
Suppose  $u$ is a local minimizer of \reff{pop},
and there exists a Lagrange multiplier
$
\lmd = ( \lmd_1 \ddd \lmd_{\hat{m}} )
$
satisfying the Karush-Kuhn-Tucker (KKT) conditions
\be\label{fooc}
\nabla  z_n = e_n = \lmd_1 \nabla g_1(u) + \cdots + \lmd_{\hat{m}} \nabla g_{\hat{m}}(u),
\ee
\be\label{cc}
g_j(u) \ge 0, \, \lmd_j \ge 0, \, \lmd_j g_j(u) = 0, \, j = 1 \ddd \hat{m}.
\ee
For $\lmd$ satisfying \reff{fooc}-\reff{cc}, the Lagrangian function is
\be\label{lagrangian}
\mathscr{L}(z) \coloneqq z_n - \lmd_1 g_1(z) - \cdots - \lmd_{\hat{m}} g_{\hat{m}}(z).
\ee
Recall that a polynomial $\varphi(z)$ is SOS-convex 
if its Hessian $\nabla^2 \varphi(z) = H(z)H(z)^T$ 
for some matrix polynomial $H(z)$.
We refer to Chapter~7 of \cite{nie2023moment} for SOS convex polynomials.

\begin{theorem}
Let $\mathscr{L}(z)$ be the Lagrangian function for \reff{pop}. 
Suppose $\mathscr{L}(z)$ is SOS-convex, then the Moment-SOS relaxation is tight for all $k\ge k_0$, 
i.e., 
$
\vartheta_{sos,k} = \vartheta_{mom,k} = \vartheta_{\min}.
$
\end{theorem}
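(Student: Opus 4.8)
The plan is to produce an explicit sum-of-squares certificate for $z_n - \vartheta_{\min}$ inside the quadratic module $\qmod{g}_{2k_0}$, which then forces tightness at every order through the sandwich bound \reff{bound}. The starting point is the defining structural property of SOS-convex polynomials (see Chapter~7 of \cite{nie2023moment}): if $\varphi$ is SOS-convex, then for every point $a$ the gradient residual $\varphi(z) - \varphi(a) - \nabla\varphi(a)^T(z - a)$ is itself a sum of squares, of degree at most $\deg(\varphi)$. This is the SOS-strengthening of the ordinary first-order convexity inequality, and it is precisely what lets a convexity argument pass through the Moment-SOS machinery with controlled degree.

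First I would apply this property to $\varphi = \mathscr{L}$ at the KKT point $a = u$ supplied by the paragraph preceding \reff{lagrangian}. By the stationarity condition \reff{fooc}, $\nabla \mathscr{L}(u) = e_n - \sum_{j} \lmd_j \nabla g_j(u) = 0$, so the residual collapses and we obtain $\mathscr{L}(z) - \mathscr{L}(u) \in \Sig[z]$. Next, complementary slackness \reff{cc} gives $\mathscr{L}(u) = u_n - \sum_j \lmd_j g_j(u) = u_n$. Combining these with $\lmd_j \ge 0$, for any feasible $z$ we have $z_n = \mathscr{L}(z) + \sum_j \lmd_j g_j(z) \ge \mathscr{L}(u) = u_n$, so that $u$ is in fact a global minimizer of \reff{pop} and $u_n = \vartheta_{\min}$.

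Then I would assemble the certificate. Writing $\sigma_0 \coloneqq \mathscr{L}(z) - \mathscr{L}(u) \in \Sig[z]$ and recalling $z_n = \mathscr{L}(z) + \sum_j \lmd_j g_j(z)$, we obtain the identity
\[
z_n - \vartheta_{\min} = \sigma_0 + \sum_{j=1}^{\hat{m}} \lmd_j g_j(z),
\]
in which each coefficient $\lmd_j \ge 0$ serves as a degree-zero SOS multiplier. A short degree count confirms membership in $\qmod{g}_{2k_0}$: since $z_n$ is linear, $\deg(\mathscr{L}) \le \deg(g)$, hence $\deg(\sigma_0) \le \deg(g) \le 2k_0$ and $\deg(\lmd_j g_j) \le \deg(g) \le 2k_0$, using $2k_0 \ge \deg(g)$. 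Therefore $z_n - \vartheta_{\min} \in \qmod{g}_{2k_0} \subseteq \qmod{g}_{2k}$ for every $k \ge k_0$, which makes $\gm = \vartheta_{\min}$ feasible for \reff{pop_sos} and so $\vartheta_{sos,k} \ge \vartheta_{\min}$. Combined with $\vartheta_{sos,k} \le \vartheta_{mom,k} \le \vartheta_{\min}$ from \reff{bound}, all three quantities coincide.

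The main obstacle is the very first step: everything rests on the fact that an SOS-convex polynomial certifies its own gradient inequality by an explicit sum of squares of bounded degree. Without the SOS qualifier, convexity would yield only a pointwise inequality, which is not a membership statement in $\qmod{g}_{2k}$ and would not give tightness at a finite order. The remaining steps — evaluating $\mathscr{L}$ at the KKT point, invoking complementary slackness, and the degree bookkeeping — are routine once this structural lemma is in hand. I would also flag the implicit hypothesis that a KKT pair $(u,\lmd)$ exists, since the Lagrangian \reff{lagrangian} and the stationary point both depend on it.
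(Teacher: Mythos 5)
Your proof is correct and follows essentially the same route as the paper: both build the certificate $z_n - \vartheta_{\min} = \sigma(z) + \sum_{j} \lmd_j g_j(z) \in \qmod{g}_{2k_0}$ and conclude via \reff{bound} and monotonicity. The only difference is cosmetic: you invoke the gradient-inequality lemma for SOS-convex polynomials as a known result from Chapter~7 of \cite{nie2023moment}, whereas the paper re-derives exactly that fact inline via the second-order Taylor integration formula applied to $\mathscr{L}$ at the minimizer (and your added verification that the KKT point is a global minimizer with $u_n = \vartheta_{\min}$ is a small tightening of a step the paper takes for granted).
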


\begin{proof}
The Lagrangian function for \reff{pop} is \reff{lagrangian} with
Lagrange multipliers $\lmd_1 \ddd \lmd_{\hat{m}} \ge 0$.
Suppose $z^*$ is the minimizer of \reff{pop}.
Then, we have $\mathscr{L}(z^*) = \vartheta_{\min}$ and $\nabla \mathscr{L}(z^*) = 0$.
By the integration formula,
\begin{align*}
\mathscr{L}(z) &= \mathscr{L}(z^*) + (z-z^*)\nabla\mathscr{L}(z^*)
+ \int_0^1 \int_0^t (z-z^*)^T \nabla^2 \mathscr{L}(s(z-z^*)+z^*) (z-z^*) \, \mt{d}s\mt{d}t \\
&= \vartheta_{\min} + (z-z^*)^T \Big( \int_0^1 \int_0^t  \nabla^2 \mathscr{L}(s(z-z^*)+z^*)  \, \mt{d}s\mt{d}t \Big) (z-z^*).
\end{align*}
Since $\mathscr{L}(z)$ is SOS-convex, the Hessian $\nabla^2 \mathscr{L}(z)$ is an SOS matrix polynomial, so $\sig(z) \coloneqq \mathscr{L}(z) - \vartheta_{\min}$ is an SOS polynomial.
This implies
\[
z_n - \vartheta_{\min} = \sig(z) + \lmd_1 g_1(z) + \cdots + \lmd_{\hat{m}} g_{\hat{m}}(z)
\in \qmod{g}_{2k_0}.
\]
Therefore, $\vartheta_{sos,k_0} \ge \vartheta_{\min}$. 
By \reff{bound} and the monotonicity of $\vartheta_{sos,k}$ and $\vartheta_{mom,k}$, 
we get the desired result.
\end{proof}

\begin{remark}
If the minimum value for \reff{pop_mom} is zero, then the coefficient recovery for the activation functions is exact.
\end{remark}

\begin{example}\rm

Consider a two-hidden-layer network with polynomial activation functions
\[
p_1(t) = c_{1,2}t^2 + c_{1,1}t + 1, \quad p_2(t) = c_{2,1}t + c_{2,0}.
\]
Let $m_0 = m_1 = m_2 = m_3 = 4$ and weight matrices
\[
W_1 = 
\bbm
 1   &  0  &  -1   &  1 \\
0  &   1   &  1   &  1 \\
-1  &   0  &   1  &  -1 \\
-2  &   1  &  -1  &   0
\ebm,
\quad W_2 = 
\bbm
1  &  -1  &   0   &  2 \\
2   &  1   &  1   &  0 \\
1   &  1   &  1  &   2 \\
0  &   1  &   1   &  1 
\ebm,
\quad W_3 = 
\bbm  
1  &    1   &   0  &    1 \\
-2  &   -1  &    1   &   1 \\
1  &    0   &   1   &   1 \\
-1  &    0  &   -2   &   1
\ebm.
\]
Suppose the sample size $N=2$ and let $\{(x_i, y_i)\}_{i=1}^N$ be the training data such that
\[
x_1 = \bbm 2 \\ 1 \\ 0 \\ -1 \ebm, \quad y_1 = \bbm 66 \\ -22 \\ 106 \\ -104 \ebm, 
\quad x_2 = \bbm -1 \\ 1 \\ 1 \\ 1 \ebm, \quad y_2 = \bbm 38 \\ 30 \\ 46 \\ -33 \ebm.
\]
For $z \coloneqq (\bfc, \theta) = (c_{1,1}, c_{1,2}, c_{2,0}, c_{2,1}, \theta)$, we have
\[
\hat{f}(z) - \hat{y} = 
 \bbm
3c_{2,0} + 9c_{2,1} + 29c_{1,2}c_{2,1} - 52 \\
5c_{1,1}c_{2,1} - c_{2,0} + 5c_{1,2}c_{2,1} - 4 \\
3c_{2,0} + 10c_{2,1} - c_{1,1}c_{2,1} + 41c_{1,2}c_{2,1} - 76 \\
\frac{5}{2}c_{1,1}c_{2,1} - 9c_{2,1} - 2c_{2,0} - \frac{73}{2}c_{1,2}c_{2,1} + \frac{137}{2} \\
\ebm.
\]
To solve the optimization problem \reff{epi_hat}, we solve the moment relaxation \reff{pop_mom}.
For relaxation order $k=1$, the flat truncation condition \reff{flat} does not hold.
However, \reff{flat} holds for $k=2$, and we can extract the minimizer for \reff{epi_hat}:
\[
\bfc^* = (1, -2, 1, -1), \quad \theta^* = 0.
\]

\end{example}

\section{Numerical experiments}
\label{eg}

This section provides numerical experiments for learning polynomial activation functions.
All computations are implemented using MATLAB R2023b on a MacBook Pro equipped with Apple M1 Max processor and 16GB RAM. The semidefinite programs are solved by the software \texttt{GloptiPoly} \cite{henrion2009gloptipoly}, which calls the SDP package \texttt{SeDuMi} \cite{sturm1999using}. For neatness, only four decimal digits are displayed for computational results.

{\bf Normalization of coefficients.}
We remark that the normalization of coefficients is needed.
To see this, consider a two-hidden-layer network with linear activation functions
\[
p_1 = c_{1,1}t + c_{1,0}, \quad p_2(t) = c_{2,1}t + c_{2,0}.
\]
Given any weight matrices $W_1,W_2,W_3$,
for an input $x \in \re^{m_0}$ , \reff{f_map} gives the output
\begin{equation*}
f(x; c_{1,0}, c_{1,1}, c_{2,0}, c_{2,1}) = c_{2,1}(c_{1,1}W_3W_2W_1x+c_{1,0}W_3W_2\mathbf{1})+c_{2,0}W_3\mathbf{1},
\end{equation*}
where $\mathbf{1}=[1 \cdots 1]^T \in \re^{m_3}$. 
Note that 
\[
f(x; c_{1,0}, c_{1,1}, c_{2,0}, c_{2,1}) = f(x; \tau c_{1,0}, \tau c_{1,1}, c_{2,0}, \tau^{-1} c_{2,1})
\]
for all scalar $\tau \ne 0$.
So in general, we can normalize the coefficients such that $c_{1,0}=1$.
For $D > 2$ hidden layers, we can similarly normalize the coefficients such that $c_{\ell,0}=1$ for all $\ell = 1 \ddd D-1$.

{\bf Experiments on synthetic data.} 
We illustrate our method on synthetic $D$-layer networks.
For $\ell = 1 \ddd D+1$, let
$
W_{\ell} \in \re^{m_{\ell} \times m_{\ell - 1}}
$
be the randomly generated weight matrices, and let
\[
p_{\ell}(t) = c_{\ell, d_{\ell}}t^{d_{\ell}} + \cdots + c_{\ell, 1}t + c_{\ell, 0}, 
\quad \ell = 1 \ddd D,
\]
be the polynomial activation functions.
In the above, we normalize the coefficients such that $c_{1, 0} = \cdots = c_{D-1,0} = 1$.
We then randomly generate the rest of the coefficients
$\bfc$ (defined in \reff{c}), for which we denote $\bfc_{\mathrm{true}}$.
The training input-output data $\{(x_i, y_i)\}_{i=1}^N$ is generated as follows.
We choose the input $x_i$ randomly.
The corresponding output $y_i$ is obtained as 
($p_{\ell}$ is applied element-wise with the coefficient vector $\bfc_{\mathrm{true}}$)
\be\label{xi_to_yi}
h_0 \coloneqq x_i, \quad v_{\ell} \coloneqq W_{\ell}h_{\ell-1},
\quad h_{\ell} \coloneqq p_{\ell}(v_{\ell}),
\quad \ell = 1 \ddd D, \quad y_i = W_{D+1}h_D + \vareps,
\ee
where $\vareps = \texttt{1e-2*randn}(m_{D+1},1)$ is a random perturbing noise vector.
We learn the activation polynomial coefficient vector $\bfc$
by solving Moment-SOS relaxations \reff{pop_sos}-\reff{pop_mom}.
We remark that for all generated instances,
the learning problem \reff{epi_hat} is solved successfully by the relaxation \reff{pop_mom} with order $k=3$.
The learned coefficient vector $\bfc$ is denoted as $\bfc_{\mathrm{pred}}$.
The learning accuracy is assessed by the errors
\[
\mathrm{AbsErr} = \big\| \bfc_{\mathrm{true}} - \bfc_{\mathrm{pred}} \big\|_2, \quad 
\mathrm{RelErr} = \big\| \bfc_{\mathrm{true}} - \bfc_{\mathrm{pred}} \big\|_2 \, \big/ \, \| \vareps \|_2.
\]

{\it i)} For the two-layer network with quadratic activation functions
\[
p_1(t) = c_{1,2}t^2 + c_{1,1}t + 1, \quad p_2(t) = c_{2,2}t^2 + c_{2,1}t + c_{2,0},
\]
the errors and consumed time for different dimensions are displayed in Table~\ref{tab:2layer22}.

\begin{table}[htb]
\caption{Accuracy for $2$-layer DNN with quadratic activations.}
\label{tab:2layer22}
\centering
\begin{tabular}{ccccc}
\toprule
$(N, m_0, m_1, m_2, m_3)$ & $\mathrm{AbsErr}$ & $\mathrm{RelErr}$ & Time & $\| \vareps \|_2$ \\
\midrule
$(50, 20, 20, 20, 20)$ &    0.0019    &    0.0452    &    5.3287   &  0.0430   \\
$(100, 25, 25, 25, 25)$ &    0.0013    &    0.0295    &   5.9034   &   0.0485    \\
$(150, 30, 30, 30, 30)$ &    0.0014    &   0.0265     &   7.0377   &    0.0521   \\
$(200, 35, 35, 35, 35)$ &    0.0018    &    0.0323    &   6.3813   &   0.0557    \\
$(250, 40, 40, 40, 40)$ &    0.0012    &    0.0201    &   7.9470   &    0.0598   \\
\bottomrule
\end{tabular}
\end{table}

{\it ii)} For the two-layer network with quadratic and cubic activation functions
\[
p_1(t) = c_{1,2}t^2 + c_{1,1}t + 1, \quad p_2(t) = c_{2,3}t^3 + c_{2,2}t^2 + c_{2,1}t + c_{2,0},
\]
the errors and consumed time for different dimensions are displayed in Table~\ref{tab:2layer23}.

\begin{table}[htb]
\caption{Accuracy for $2$-layer DNN with quadratic and cubic activations.}
\label{tab:2layer23}
\centering
\begin{tabular}{ccccc}
\toprule
$(N, m_0, m_1, m_2, m_3)$ & $\mathrm{AbsErr}$ & $\mathrm{RelErr}$ & Time & $\| \vareps \|_2$ \\
\midrule
$(50, 20, 20, 20, 20)$ &    0.0065    &    0.1332    &    17.2626   &   0.0480   \\
$(100, 25, 25, 25, 25)$ &    0.0061    &    0.1203    &   20.7595   &   0.0502    \\
$(150, 30, 30, 30, 30)$ &    0.0020    &    0.0381    &   22.8920   &   0.0513   \\
$(200, 35, 35, 35, 35)$ &    0.0012    &    0.0197    &  24.5851    &   0.0579    \\
$(250, 40, 40, 40, 40)$ &     0.0012   &    0.0182    &   26.3780   &   0.0609    \\
\bottomrule
\end{tabular}
\end{table}

{\it iii)} For the three-layer network with linear activation functions
\[
p_{1}(t) = c_{1,1}t + 1, \quad p_{2}(t) = c_{2,1}t + 1, \quad p_{3}(t) = c_{3,1}t + c_{3,0},
\]
the errors and consumed time for different dimensions are displayed in Table~\ref{tab:3layer111}.

\begin{table}[htb]
\caption{Accuracy for $3$-layer DNN with linear activations.}
\label{tab:3layer111}
\centering
\begin{tabular}{ccccc}
\toprule
$(N, m_0, m_1, m_2, m_3, m_4)$ & $\mathrm{AbsErr}$ & $\mathrm{RelErr}$ & Time & $\| \vareps \|_2$ \\
\midrule
$(50, 20, 20, 20, 20, 20)$ &    0.0016    &    0.0368    &  0.9970   &   0.0442    \\
$(100, 25, 25, 25, 25, 25)$ &    0.0012   &    0.0254     &   0.9767   &    0.0475   \\
$(150, 30, 30, 30, 30, 30)$ &    0.0006    &    0.0114    &   1.1439   &   0.0549    \\
$(200, 35, 35, 35, 35, 35)$ &    0.0011    &    0.0172    &   1.2455   &   0.0651    \\
$(250, 40, 40, 40, 40, 40)$ &    0.0009    &   0.0151     &   1.3911   &   0.0598    \\
\bottomrule
\end{tabular}
\end{table}

{\it iv)} For the three-layer network with linear and quadratic activation functions
\[
p_{1}(t) = c_{1,2}t^2 + c_{1,1}t + 1, \quad p_{2}(t) = c_{2,1}t + 1, \quad 
p_{3}(t) = c_{3,1}t + c_{3,0},
\]
the errors and consumed time for different dimensions are displayed in Table~\ref{tab:3layer211}.

\begin{table}[htb]
\caption{Accuracy for $3$-layer DNN with linear and quadratic activations.}
\label{tab:3layer211}
\centering
\begin{tabular}{ccccc}
\toprule
$(N, m_0, m_1, m_2, m_3, m_4)$ & $\mathrm{AbsErr}$ & $\mathrm{RelErr}$ & Time & $\| \vareps \|_2$ \\
\midrule
$(50, 20, 20, 20, 20, 20)$ &    0.0012    &   0.0295    &   2.8547   &    0.0422   \\
$(100, 25, 25, 25, 25, 25)$ &   0.0055    &    0.1252     &   3.4459  &    0.0484   \\
$(150, 30, 30, 30, 30, 30)$ &    0.0030    &    0.0540    &   3.8279   &    0.0553   \\
$(200, 35, 35, 35, 35, 35)$ &    0.0009    &   0.0143     &   4.0465   &    0.0575   \\
$(250, 40, 40, 40, 40, 40)$ &    0.0007    &    0.0122    &   4.3156   &   0.0628    \\
\bottomrule
\end{tabular}
\end{table}

{\bf Residual analysis for two-layer network.}
We consider a two-hidden-layer network with randomly generated weight matrices
$W_1, W_2, W_3$.
Suppose the polynomial activation functions are
\[
p_1(t) = c_{1,3}t^3 + c_{1,2}t^2 + c_{1,1}t + 1, \quad p_2(t) = c_{2,2}t^2 + c_{2,1}t + c_{2,0}.
\]
We randomly generate $\bfc \coloneqq (c_{1,1}, c_{1,2}, c_{1,3}, c_{2,0}, c_{2,1}, c_{2,2})$, for which we denote $\bfc_{\mathrm{true}}$.
The training input-output data $\{(x_i, y_i)\}_{i=1}^N$ is generated as follows.
We choose the input $x_i$ randomly,
and the corresponding output $y_i$ is obtained as in \reff{xi_to_yi} with the coefficient vector $\bfc_{\mathrm{true}}$.
We learn the activation polynomial coefficient vector $\bfc$
by solving Moment-SOS relaxations \reff{pop_sos}-\reff{pop_mom}.
The learned coefficient vector $\bfc$ is denoted as $\bfc_{\mathrm{pred}}$.
To evaluate the quality of the learned $\bfc_{\mathrm{pred}}$, 
we generate an independent test set $\{(\tilde{x}_i, \tilde{y}_i)\}_{i=1}^N$ 
in the same way as the training set.
The predicted output $y_i'$ is obtained as in \reff{xi_to_yi} with the coefficient vector $\bfc_{\mathrm{pred}}$ and $\vareps = 0$.
We use two residual plots to illustrate the learning accuracy.
First, for each $j=1 \ddd m_3$, 
we plot the $j$th entry of the residual $\vareps_i \coloneqq y'_i - \tilde{y}_i$ against the index $i = 1 \ddd N$ (the left plot in Figure~\ref{fig:two-layer-residual}).
Second, we plot the norm $\|\vareps_i\|_2$ against the index $i$ (the right plot in Figure~\ref{fig:two-layer-residual}).
For neatness of the paper, 
we display these two residual plots only for $(N,m_0,m_1,m_2,m_3)=(50,20,20,20,20)$ in Figure~\ref{fig:two-layer-residual}.
We can see that in both residual plots, 
the points are close to zeros and show no visible trends along the index $i$.

\begin{figure}[htb]
\centering
\includegraphics[width=0.45\textwidth]{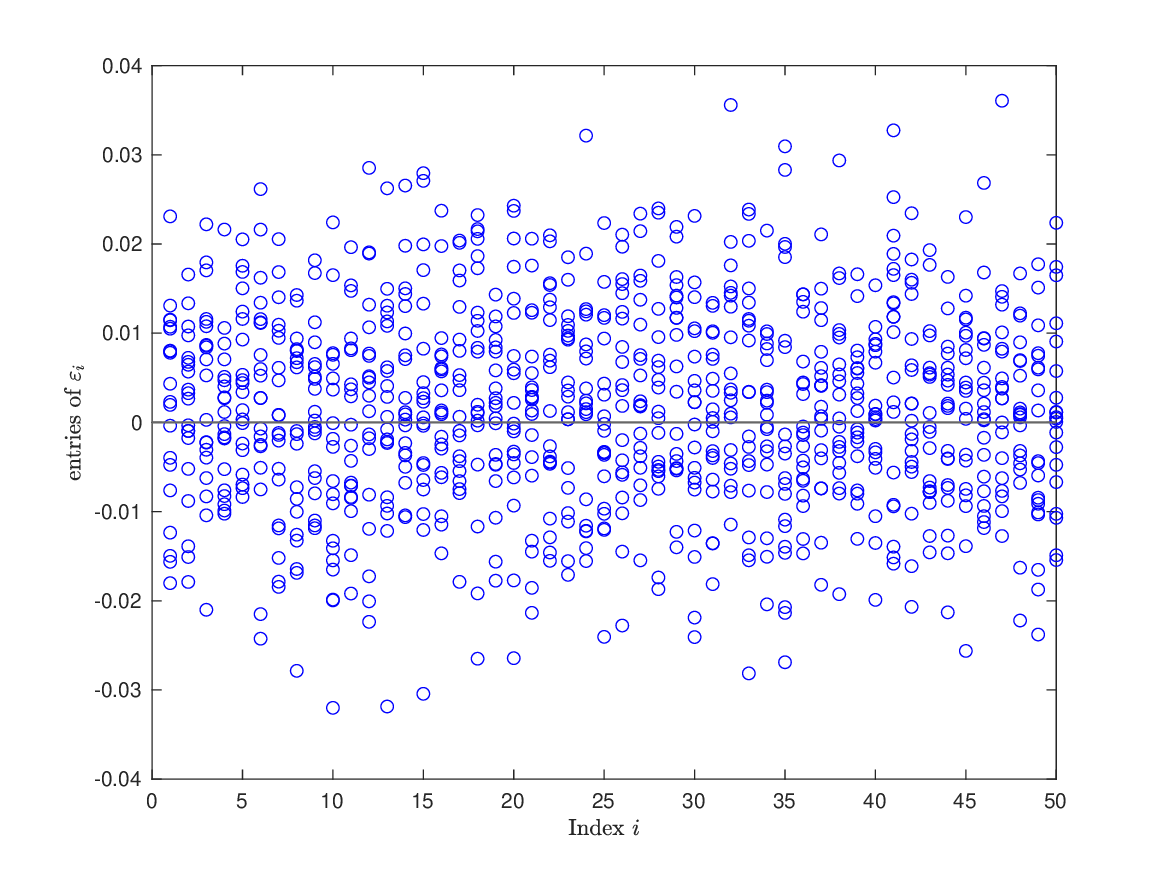}
\includegraphics[width=0.45\textwidth]{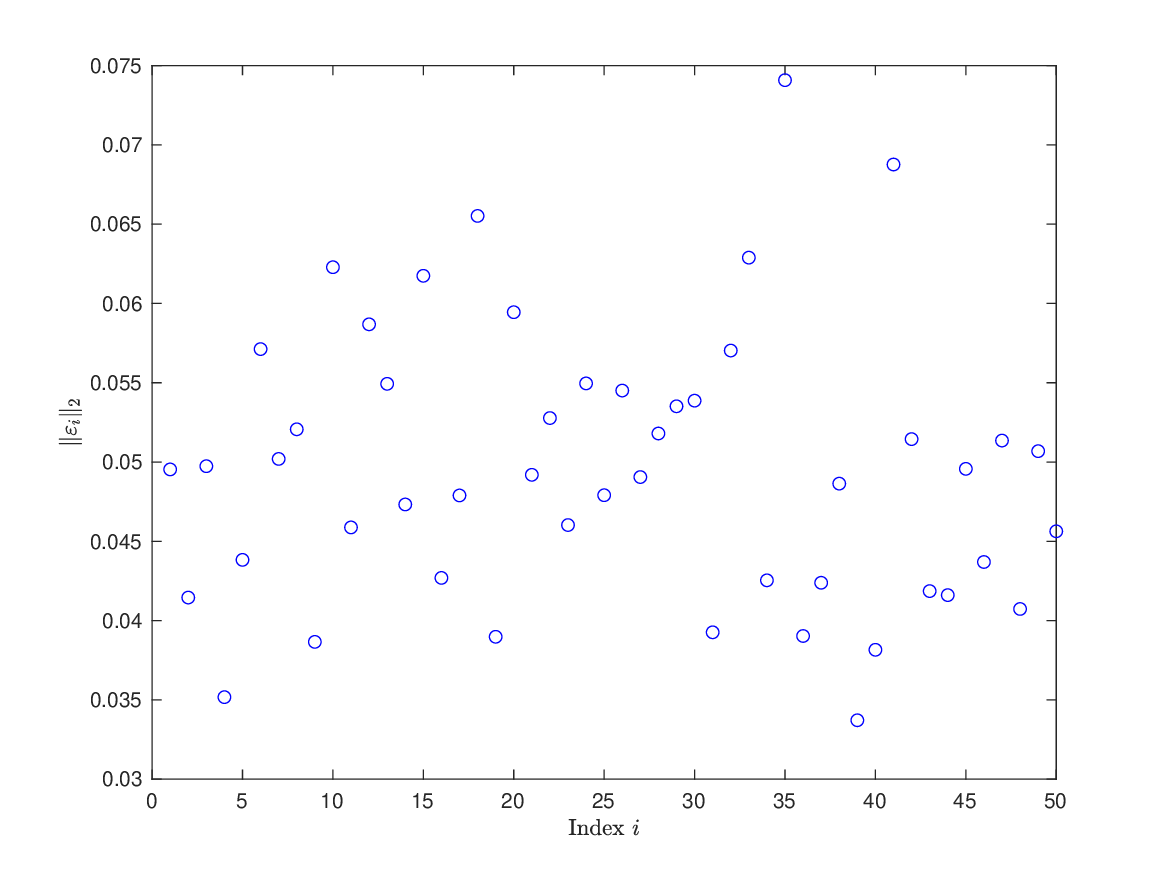}
\caption{Plots for residual error $\vareps_i$ with $i=1 \ddd 50$.}
\label{fig:two-layer-residual}
\end{figure}

For different $(N,m_0,m_1,m_2,m_3)$, we report in Table~\ref{tab:two-layer-residual} the mean squared error (MSE) and root mean squared error (RMSE):
\[
\mathrm{MSE} = \frac{1}{N} \sum_{i=1}^{N} \|\vareps_i\|_2^2,
\quad \mathrm{RMSE} = \sqrt{\mathrm{MSE}}.
\]

\begin{table}[htb]
\caption{Test errors for $2$-layer DNN with cubic and quadratic activations.}
\label{tab:two-layer-residual}
\centering
\begin{tabular}{ccc}
\toprule
$(N, m_0, m_1, m_2, m_3)$ & MSE & RMSE \\
\midrule
$(50, 20, 20, 20, 20)$ &   0.0090    &    0.0830    \\
$(100, 25, 25, 25, 25)$ &    0.0026   &    0.0511   \\
$(150, 30, 30, 30, 30)$ &     0.0046   &   0.0669   \\
$(200, 35, 35, 35, 35)$ &    0.0078    &    0.0808   \\
$(250, 40, 40, 40, 40)$ &    0.0133    &   0.1002   \\  
\bottomrule
\end{tabular}
\end{table}

\section{Concluding remarks}
\label{conclu}

This paper proposes a new training framework for deep neural networks with learnable polynomial activation functions. 
This training process can be formulated as polynomial optimization,
which is solvable by Moment-SOS relaxations.
Numerical experiments demonstrate that our approach can learn activation polynomial coefficients exactly in absence of noises and remains robust under noises. 
This highlights the potential of polynomial optimization techniques in deep learning, 
providing new insights into the design and analysis of activation functions.

The Moment-SOS hierarchy is a powerful analytical tool for studying properties of deep neural networks. 
It provides globally optimal performance benchmarks and structural insights into the interplay between network weights and polynomial activation functions. 
The global optimality certification provided by the Moment-SOS hierarchy gives a promising direction to investigate hybrid methods, which combine gradient-based optimization methods.  
For example, backpropagation, with its unparalleled scalability, could be used for the bulk of the training, while the Moment-SOS hierarchy could be used to solve critical subproblems. 
Moreover, it could be applied to a single layer to find a globally optimal polynomial activation functions, which can be further used to initialize larger networks. 
It could also be used to design regularization terms that guide the gradient descent process toward a better basin of attraction.

\begin{ack}
%Use unnumbered first level headings for the acknowledgments. All acknowledgments
%go at the end of the paper before the list of references. Moreover, you are required to declare
%funding (financial activities supporting the submitted work) and competing interests (related financial activities outside the submitted work).
%More information about this disclosure can be found at: \url{https://neurips.cc/Conferences/2025/PaperInformation/FundingDisclosure}.
%
%
%Do {\bf not} include this section in the anonymized submission, only in the final paper. You can use the \texttt{ack} environment provided in the style file to automatically hide this section in the anonymized submission.

Jiawang Nie and Linghao Zhang are partially supported by the National Science Foundation grant DMS-2110780.

\end{ack}

\end{document}